\documentclass[11pt]{amsart}
\usepackage[margin=1in]{geometry}
\usepackage{quiver}

\usepackage{amsmath,amsthm,amssymb}
\usepackage{euscript}
\usepackage{enumerate}
\usepackage{amsfonts}
\usepackage{bm}
\usepackage{hyperref}
\usepackage{xcolor}
\hypersetup{
    colorlinks,
    linkcolor={blue!80!pink},
    citecolor={blue!80!pink},
    urlcolor={blue!100!black}
}
\usepackage[capitalise]{cleveref}
\usepackage[colorinlistoftodos]{todonotes}


\newtheorem{theorem}{Theorem}[section]

\newtheorem{corollary}[theorem]{Corollary}

\newtheorem{remark}[theorem]{Remark}
\newtheorem{lemma}[theorem]{Lemma}

\renewcommand{\url}[1]{\href{#1}{Available online}}

\newcommand{\adjunction}[4]{%
  #1\colon #2%
  \mathrel{\vcenter{%
    \offinterlineskip\m@th
    \ialign{%
      \hfil$##$\hfil\cr
      \longrightharpoonup\cr
      \noalign{\kern-.3ex}
      \smallbot\cr
      \longleftharpoondown\cr
    }%
  }}%
  #3 \noloc #4%
}

\newcommand{\Sp}{\operatorname{Sp}}

\newcommand{\id}{\operatorname{id}}

\newcommand{\sphere}{\mathbb{S}}
\newcommand{\Map}{\operatorname{Map}}

\newcommand{\K}{\operatorname{K}}
\newcommand{\F}{\mathbb{F}}

\newtheorem{maintheorem}{Theorem}

\title{Tate Frobenius and chromatic blueshift for finite spectra}
\author{Pengkun Huang}
\date{\today}

\begin{document}

\begin{abstract}
    Let $p$ be a prime and $n\geq 1$. We prove that the Tate-Frobenius of the $\K(n)$-local sphere induces an equivalence between $L_{n-1}L_{\K(n)}Y$ and $(L_{\K(n)}Y)^{t\Sigma_p} \cong (L_nY)^{t\Sigma_p}$ for any finite $p$-local type $n-1$ spectrum $Y$ with trivial $\Sigma_p$-action.
\end{abstract}
\maketitle
\section{Introduction}
It has long been observed that the Tate construction exhibits chromatic blueshift: applying it to a periodic spectrum can lower its chromatic height. The earliest example is the theorem of Mahowald and Shick that, at the prime \(2\), $(L_n^fX)^{tC_2}$ is contractible for any finite spectrum $X$ of type $n$ \cite{MahowaldShick1988}. Later, Hovey and Sadofsky  \cite{HoveySadofsky1996} proved that the Tate construction of an $L_n$-local spectrum with trivial finite-group action is $L_{n-1}$-local. These results describe the chromatic behavior of the Tate construction, while leaving the homotopy type of the resulting spectrum to be determined.

A more precise prediction is provided by a conjecture of Hopkins and Mahowald, recorded in \cite{AndoMoravaSadofsky1998}. At the prime \(2\), it predicts that, for a finite spectrum \(Y\) of type \(n-1\),
\[
(L_nY)^{tC_2}
\simeq
L_{n-1}(Y_2^\wedge)
\vee
\Sigma^{-1}L_{n-1}(Y_2^\wedge).
\]
This prediction is closely related to the strong chromatic splitting conjecture, which concerns the lower-height localization \(L_{n-1}L_{\K(n)}Y\). At height two, the strong chromatic splitting conjecture holds at every odd prime: for $p\geq5$ it follows from the calculations of Shimomura--Yabe \cite{SY95}, revisited by Behrens \cite{Beh12}, while the case $p=3$ was established by Goerss--Henn--Mahowald
\cite[Theorem~5.11]{GHM14}.

However, the additional Moore-spectrum summands in the calculation at height two and the prime \(2\) by Beaudry, Goerss, and Henn \cite{BeaudryGoerssHenn2022} show that the corresponding two-summand description of the chromatic overlap should be modified at the prime \(2\). This suggests one could compare the Tate spectrum directly with the chromatic overlap, independently of a proposed decomposition given by chromatic splitting. The main result of this paper constructs such a comparison and proves that it is an equivalence

\begin{maintheorem}[\cref{main}] Let \(p\) be a prime and \(n\geq1\). For every finite \(p\)-local spectrum \(Y\) of type \(n-1\), the Tate-Frobenius of the \(K(n)\)-local sphere induces a natural equivalence
\[
\psi_Y\colon
L_{n-1}L_{\K(n)}Y
\xrightarrow{\;\simeq\;}
(L_{\K(n)}Y)^{t\Sigma_p}
\simeq
(L_nY)^{t\Sigma_p},
\]
where $\Sigma_p$ acts trivially on $Y$.
\end{maintheorem}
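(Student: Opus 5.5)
The argument has three stages: construct the map, reduce to a single finite spectrum, then check it there. To build $\psi_Y$, write $E=L_{\K(n)}\sphere$, an $E_\infty$-ring. Its Tate diagonal (Tate-Frobenius) is an $E_\infty$-map
\[
\varphi\colon E\to (E^{\otimes p})^{t\Sigma_p}\to E^{t\Sigma_p},
\]
where the second map is induced by the multiplication $E^{\otimes p}\to E$. Here $E^{t\Sigma_p}$ carries the trivial action. By Hovey--Sadofsky, the Tate construction of an $L_n$-local spectrum with trivial action is $L_{n-1}$-local. Hence $\varphi$ factors through an $E_\infty$-map $L_{n-1}E\to E^{t\Sigma_p}$. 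For $Y$ finite, $(-)^{t\Sigma_p}$ commutes with $-\otimes Y$, since it is exact. Tensoring with $Y$ therefore gives
\[
\psi_Y\colon L_{n-1}L_{\K(n)}Y \simeq L_{n-1}E\otimes Y\to E^{t\Sigma_p}\otimes Y\simeq (L_{\K(n)}Y)^{t\Sigma_p}.
\]
The identification $(L_{\K(n)}Y)^{t\Sigma_p}\simeq(L_nY)^{t\Sigma_p}$ follows from the fiber sequence $M_nY\to L_nY\to L_{n-1}Y$ together with two facts. First, $(L_{n-1}Y)^{t\Sigma_p}$ vanishes, because the Tate construction of an $L_{n-1}$-local spectrum is $L_{n-2}$-local, while the $L_{n-2}$-localization of a type $n-1$ spectrum is zero. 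Second, for $Y$ of type $n-1$ the monochromatic layers $M_nY\simeq M_nL_{\K(n)}Y$ agree, and the same vanishing argument applies on the $L_{\K(n)}Y$ side.

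Both sides of $\psi_Y$ are exact functors of $Y$ on finite type $\geq n-1$ spectra, and $\psi$ is natural. The thick subcategory theorem then says it suffices to prove $\psi_Y$ is an equivalence for one type $n-1$ spectrum. I would take $Y=\sphere/(p^{i_0},v_1^{i_1},\dots,v_{n-2}^{i_{n-2}})$, a generalized Moore spectrum. Since $L_{n-1}$ of such a $Y$ is $L_{\K(n-1)}$-local, both sides are $\K(n-1)$-local. It then suffices to check that $\K(n-1)_*\psi_Y$, or equivalently $E_{n-1}$-homology, is an isomorphism. Alternatively, one can use the Morava $E$-theory description $L_{\K(n)}Y\simeq (E_n\otimes Y)^{h\mathbb{G}_n}$ and check the map after base change to $E_n$.

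The key computation is to identify both sides after $\K(n-1)$-localization. For the Tate side, the plan is as follows. Write $(L_{\K(n)}Y)^{t\Sigma_p}$ as the cofiber of the norm, so that the homotopy orbits vanish after $L_{n-1}$. Then the Tate construction is $L_{n-1}$ of the homotopy fixed points $F(B\Sigma_{p+},L_{\K(n)}Y)$. This can be computed with the $E_n$-cohomology of $B\Sigma_p$: $E_n^*(B\Sigma_p)$ is the ring of functions on subgroups of order $p$ in the formal group, and inverting $v_{n-1}$ (rationally in the relevant sense) isolates the Tate part. For the Frobenius side, the Tate diagonal on $E_n$ is computed by Nikolaus--Scholze or Ando--Hopkins--Strickland type power operations. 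It corresponds to the total power operation $E_n\to E_n^{t\Sigma_p}$, which records the quotient by the universal subgroup of order $p$. The expected outcome is that $L_{\K(n-1)}(E_n^{t\Sigma_p})$ is the $\K(n-1)$-localization of a Lubin--Tate-type ring obtained by adjoining a level-structure parameter. After taking $\mathbb{G}_n$-homotopy fixed points and tensoring with $Y$, the map from $L_{\K(n-1)}L_{\K(n)}Y$ should then be an equivalence. On the right side the group-theoretic descent reduces the $\Sigma_p$ level data to the point, by Galois descent along the order-$p$ subgroup cover.

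The main obstacle is this last step: showing that the Tate-Frobenius, rather than merely some abstract map, induces the isomorphism on $\K(n-1)$-homology after descent along $\mathbb{G}_n$. It requires relating the total power operation to the étale/Galois extension $E_n\to L_{\K(n-1)}E_n^{t\Sigma_p}$ and controlling homotopy fixed points versus Tate commuting with localization. I would first try to verify the statement in $E_n$-homology using Strickland's theorem on $E_n^0(B\Sigma_p)$ modulo transfer. I would then deduce the equivalence via $\K(n-1)$-local Morita/descent for the faithful Galois extension, and use the thick subcategory reduction to pass back to general $Y$.
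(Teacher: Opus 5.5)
Your construction of $\psi_Y$, the identification $(L_{\K(n)}Y)^{t\Sigma_p}\simeq(L_nY)^{t\Sigma_p}$, and the thick-subcategory reduction to a generalized Moore spectrum $X$ are fine and match the paper.

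\textbf{The gap.} You never show that $\psi_X$ is an equivalence, as you acknowledge yourself, and the sketch you give contains a false step.

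\emph{Homotopy orbits do not vanish after $L_{n-1}$.} Since $\Sigma^\infty_+B\Sigma_p\simeq\sphere\vee\Sigma^\infty B\Sigma_p$, the spectrum $L_{n-1}\bigl((L_{\K(n)}X)_{h\Sigma_p}\bigr)$ has $L_{n-1}L_{\K(n)}X$ as a retract. This retract is nonzero, for example because $E_n\otimes_{L_{\K(n)}\sphere}L_{n-1}L_{\K(n)}X\simeq L_{n-1}(E_n\otimes X)\neq0$. So the Tate construction is \emph{not} $L_{n-1}$ of the homotopy fixed points.

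\emph{The Galois-descent picture is misleading.} For $n\geq2$ and $I=(p,u_1,\dots,u_{n-2})$, one has $\pi_0E_n^{t\Sigma_p}/I\cong k((e_V))$. Along the canonical map this is a degree $p^{n-1}$ extension of $k((u_{n-1}))$, since $u_{n-1}\mapsto e_V^{p^{n-1}}U'$. Along $\psi$ it is isomorphic to $k((u_{n-1}))$, so there is nothing to descend.

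\textbf{What is missing.} The paper's proof rests on two ingredients you do not have.

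The first is Hahn's computation of the power operation modulo $I$. It gives $\pi_*E_n^{t\Sigma_p}/I\cong k((e_V))[u^{\pm1}]$ and $\psi(u_{n-1})=e_VU$ with $U$ a unit, while $\pi_*L_{n-1}E_n/I\cong k((u_{n-1}))[u^{\pm1}]$. Combined with flatness along $\psi$ (Strickland's finite flatness of $\operatorname{Sub}_1$), this makes
\[
L_{n-1}E_n\otimes_{E_n}M\to E_n^{t\Sigma_p}\otimes_{E_n,\psi}M
\]
an equivalence for every $M$ in the thick subcategory generated by $E_n/I$, in particular for $M=E_n\otimes X$.

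The second is a way to get from $E_n$ back to $L_{\K(n)}\sphere$. You name commuting $(-)^{t\Sigma_p}$ and $L_{n-1}$ with the continuous $\mathbb{G}_n$-fixed points as the obstacle, but you do not resolve it, and your proposed substitutes do not help as stated:
\begin{itemize}
\item Computing $\K(n-1)_*$ or $E_{n-1}$-homology of $L_{\K(n)}X$ is essentially chromatic-splitting information, not something you can read off.
\item Smashing with $E_n$ does not commute with $(-)^{t\Sigma_p}$, because $E_n$ is not finite.
\item $\K(n)$-local base change kills both sides, which are $L_{n-1}$-local.
\end{itemize}
The paper instead uses Mathew's descendability of $E_n$ in $\Sp_{\K(n)}$. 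This makes the $\mathrm{Tot}$-tower of $C^\bullet=L_{\K(n)}E_n^{\otimes\bullet+1}$ pro-constant. The exact functors $L_{n-1}(-)\otimes X$ and $(-)^{t\Sigma_p}\otimes X$ therefore commute with its limit, and it suffices to check the comparison levelwise. At level $q$, $\pi_*C^q\cong\Map_{\operatorname{cts}}(\mathbb{G}_n^q,\pi_*E_n)$, and the $\mathbb{E}_\infty$ evaluation maps $C^q\to E_n$ show that $\psi$ is postcomposition with the level-$0$ isomorphism.
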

When $n=1$, this is proven independently in \cite{heutsmeiernikolaus}. Our proof begins with Hahn’s calculation \cite{hahn2016bousfield} of the power operation on Morava \(E\)-theory modulo
$I=(p,u_1,\ldots,u_{n-2})$.
This calculation shows that the Tate Frobenius gives the required equivalence after lower-height localization and reduction modulo \(I\). We extend the comparison to the higher terms of the completed Morava descent resolution, following a similar idea in \cite[section 2]{DevinatzHopkins2004}. The descendability of $E_n$ established in \cite{Mathew2016Galois} makes the associated totalization tower pro-constant, allowing the exact functors in the comparison to pass through descent. Finally, generalized Moore spectra and the thick subcategory theorem extend the comparison from the quotient \(E_n/I\) to every finite spectrum of type \(n-1\).
  
\section*{Acknowledgment}
The author is deeply grateful to Mark Behrens for all the helpful discussions. 

\section*{Statement of AI use}
This project originated from insight the author obtained through his computations \cite{huang2026equivariant} at $n=1$ and odd primes. ChatGPT-6 subsequently suggested a proof strategy in the case of $n=p=2$.  This strategy suggested the use of Mathew's observation \cite{Mathew2016Galois} that $S_{\K(n)} \to E_n$ is $\K(n)$-locally descendable, as well as Rezk's formula for power operations for $E_2$-theory at $p=2$ using its elliptic curve model \cite{rezk2008power}. The author developed and carried out the argument for all heights and primes. ChatGPT was also used to check details, locate references, and improve the exposition. The author takes full responsibility for the mathematical content of this paper.
  
\section*{Notation and Conventions}
\begin{itemize}
    \item Let $E_n$ be the Lubin-Tate theory associated to the Honda formal group over $k=\mathbb{F}_{p^n}$. Write $\pi_*E_n = W(k)[[u_1, \cdots, u_{n-1}]] [u^{\pm 1}]$, where $|u|=-2$.
    \item Let $V$ be the reduced permutation representation $\mathbb{C}^p/\mathbb{C}$ of $\Sigma_p$.
    \item For an even-periodic cohomology theory, we write $e_V \in E^0(B\Sigma_p)$ for the normalized Euler class of $V$.
    \item For an $E_n$-module $M$, the notation $E_n^{t\Sigma_p}\otimes_{E_n,\psi}M$ is used when the $E_n$-module structure of $E_n^{t\Sigma_p}$ is induced by the Tate-Frobenius map (\ref{tate-frobenius}).
\end{itemize}

\section{Tate-Frobenius map}
Let $R$ be an $\mathbb{E}_{\infty}$-ring. By \cite[Corollary IV.1.4]{nikolaus2018topological}, it admits the Tate-Frobenius map that factors through the $\F_p^{\times}$-homotopy fixed points as follows
     \begin{equation*}
        R \rightarrow (R ^{tC_p})^{h\F_p^{\times}}  \rightarrow  R^{tC_p},
     \end{equation*}
where $\mathbb{F}_p^{\times}$ acts through its automorphism action on $C_p$.

\begin{lemma} \label{CpandSigmap}
    Let $R$ be a $p$-local spectrum. Then we have natural equivalences
    \begin{equation*}
           (R^{hC_p})^{h\F_p^{\times}} \cong R^{h\Sigma_p} \quad \text{and} \quad 
        (R^{tC_p})^{h\F_p^{\times}} \cong R^{t\Sigma_p},
    \end{equation*}
    where $\Sigma_p$ and $C_p$ act trivially on $R$.
\end{lemma}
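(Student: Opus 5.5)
The plan is to pass through the normalizer $N=N_{\Sigma_p}(C_p)\cong C_p\rtimes\F_p^{\times}$. It has index $(p-2)!$ in $\Sigma_p$, which is prime to $p$, and $C_p$ is a Sylow $p$-subgroup of $\Sigma_p$. The argument has two inputs.

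\emph{First input.} The map $BN\to B\Sigma_p$ is an $H\mathbb{Z}_{(p)}$-homology equivalence. I would prove this with the classical transfer and stable-elements argument. Since the index is prime to $p$, restriction $H^*(\Sigma_p;\mathbb{Z}_{(p)})\to H^*(N;\mathbb{Z}_{(p)})$ is split injective. Its image is identified by Cartan--Eilenberg stable elements. Because the Sylow subgroup $C_p$ is abelian, Swan's theorem says the stable elements are exactly the $N$-invariants of $H^*(C_p)$. Those invariants are also $H^*(N;\mathbb{Z}_{(p)})=H^*(C_p;\mathbb{Z}_{(p)})^{\F_p^{\times}}$, using that $|\F_p^{\times}|=p-1$ is invertible. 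So restriction is an isomorphism in positive degrees, and trivially in degree $0$.

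Consequently, for $p$-local $R$ with trivial action, the natural maps
\[
R\wedge BN_+\to R\wedge B\Sigma_{p+}, \qquad F(B\Sigma_{p+},R)\to F(BN_+,R)
\]
are equivalences. These are precisely $R_{hN}\simeq R_{h\Sigma_p}$ and $R^{h\Sigma_p}\simeq R^{hN}$.

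\emph{Second input.} For the extension $C_p\to N\to\F_p^{\times}$ there are canonical equivalences $R^{hN}\simeq (R^{hC_p})^{h\F_p^{\times}}$ and $R_{hN}\simeq(R_{hC_p})_{h\F_p^{\times}}$. Here $\F_p^{\times}$ acts on $R^{hC_p}$, $R_{hC_p}$ and $R^{tC_p}$ through conjugation on $C_p$. Combining the two inputs gives the first equivalence of the lemma.

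\emph{Tate statement.} Since $p-1$ is invertible in the $p$-local category, for any $p$-local spectrum $X$ with $\F_p^{\times}$-action the norm $X_{h\F_p^{\times}}\to X^{h\F_p^{\times}}$ is an equivalence, because $X^{t\F_p^{\times}}$ is a module over $\sphere^{t\F_p^{\times}}=0$ after $p$-localization. Also, $(-)^{h\F_p^{\times}}$ is exact. Applying it to the $\F_p^{\times}$-equivariant norm cofiber sequence $R_{hC_p}\to R^{hC_p}\to R^{tC_p}$ gives the cofiber sequence
\[
(R_{hC_p})_{h\F_p^{\times}}\to (R^{hC_p})^{h\F_p^{\times}}\to (R^{tC_p})^{h\F_p^{\times}}.
\]
I would then identify the first map with the norm $R_{hN}\to R^{hN}$, and hence, via the first input, with $R_{h\Sigma_p}\to R^{h\Sigma_p}$. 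Taking cofibers then yields $(R^{tC_p})^{h\F_p^{\times}}\simeq R^{t\Sigma_p}$.

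\emph{Main obstacle.} The delicate step is the compatibility of the norm maps. The norm for $N$ must factor as the $C_p$-norm (made $\F_p^{\times}$-equivariant) followed by the $\F_p^{\times}$-norm. It must also be compatible with the restriction $R^{h\Sigma_p}\to R^{hN}$ and the transfer-direction map $R_{hN}\to R_{h\Sigma_p}$, i.e.\ the norm for $\Sigma_p$ restricts to that of $N$ along the subgroup inclusion. For the first compatibility I would use the functoriality of the norm for group extensions, as in Nikolaus--Scholze's treatment of the Tate construction (the norm is natural in the group and transitive for normal subgroups). For the second I would check it directly from the description of the norm as the map $R\wedge BG_+\to F(BG_+,R)$ adjoint to the $G$-indexed sum.

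Naturality in $R$ is clear from the construction, since every map used is induced by maps of classifying spaces or by natural norm maps.
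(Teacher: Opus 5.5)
Your argument for the homotopy fixed point equivalence is the same as the paper's: pass to $N=N_{\Sigma_p}(C_p)$ and use that $\Sigma^\infty_+BN\to\Sigma^\infty_+B\Sigma_p$ is a $p$-local equivalence. The paper cites this from \cite[Theorem B]{HarrisKuhn1988}, while you rederive it from Swan's theorem. For the Tate equivalence the paper just cites \cite[Proposition A.18]{glasman2020stable}, whereas you deduce it from the first equivalence via norm cofiber sequences. That outline is sound, and your first compatibility (transitivity, $\operatorname{Nm}_N=(\operatorname{Nm}_{C_p})^{h\F_p^{\times}}\circ\operatorname{Nm}_{\F_p^{\times}}$) is fine.

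The second compatibility, however, is false as stated. Let $\operatorname{ind}\colon R_{hN}\to R_{h\Sigma_p}$ be induced by the inclusion. Then $\operatorname{res}\circ\operatorname{Nm}_{\Sigma_p}\circ\operatorname{ind}\neq\operatorname{Nm}_N$ in general. Take $R=H\mathbb{Z}_{(p)}$: for a trivial action the norm is multiplication by the group order on $\pi_0$. So the left side is $p!$ and the right side is $p(p-1)$. These differ by $[\Sigma_p:N]=(p-2)!\neq 1$ once $p\geq 5$ (for $p\leq 3$ one has $N=\Sigma_p$). Your proposed direct check via the $G$-indexed sum would expose exactly this discrepancy. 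As written, the identification of the first map with $\operatorname{Nm}_{\Sigma_p}$ therefore fails.

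The fix is to use the transfer $\operatorname{tr}\colon R_{h\Sigma_p}\to R_{hN}$ (the Becker--Gottlieb transfer of $BN\to B\Sigma_p$, smashed with $R$) instead of $\operatorname{ind}$.

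\begin{itemize}
\item The relation that does hold is $\operatorname{res}\circ\operatorname{Nm}_{\Sigma_p}=\operatorname{Nm}_N\circ\operatorname{tr}$, the standard compatibility of norms with restriction and transfer. It gives a map of norm cofiber sequences whose map on cofibers is $\operatorname{res}\colon R^{t\Sigma_p}\to R^{tN}$.
\item You then need $\operatorname{tr}$ to be an equivalence. Your first input does not say this, but it implies it. The composite $\operatorname{ind}\circ\operatorname{tr}$ acts on $H_*(-;\mathbb{Z}_{(p)})$ as multiplication by $(p-2)!$. Since $\operatorname{ind}$ is an isomorphism there, the stable transfer $\Sigma^\infty_+B\Sigma_p\to\Sigma^\infty_+BN$ is an $H\mathbb{Z}_{(p)}$-homology isomorphism of connective spectra, hence a $p$-local equivalence.
\item Equivalently, you can keep $\operatorname{ind}$ and note that $\operatorname{res}\circ\operatorname{Nm}_{\Sigma_p}\circ\operatorname{ind}=\operatorname{Nm}_N\circ(\operatorname{tr}\circ\operatorname{ind})$. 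Here $\operatorname{tr}\circ\operatorname{ind}$ is a self-equivalence of $R_{hN}$, so the cofibers still agree.
\end{itemize}

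With this repair you get $R^{t\Sigma_p}\simeq R^{tN}\simeq (R^{tC_p})^{h\F_p^{\times}}$. This is a self-contained replacement for the citation. It only covers trivial actions, which is all the lemma needs, whereas the cited proposition handles arbitrary $\Sigma_p$-spectra.
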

\begin{proof}
    Notice that 
    \begin{equation*}
        (R^{hC_p})^{h\F_p^{\times}} \cong F(BN_{\Sigma_p}(C_p)_+, R),
    \end{equation*} 
    where $N_{\Sigma_p}(C_p)$ is the normalizer of $C_p$ in $\Sigma_p$. The first claim follows from the fact that $ \Sigma^{\infty}_+ BN_{\Sigma_p}(C_p) \rightarrow  \Sigma^{\infty}_+ (B\Sigma_p)$ is a $p$-local equivalence \cite[Theorem B]{HarrisKuhn1988}. The second part is a result of \cite[Proposition A.18]{glasman2020stable}. We notice that proposition establishes the equivalence of Tate spectra for any $\Sigma_p$-spectrum.
\end{proof}

\begin{remark}\label{identifyassubring}
    Because $\mathbb{F}_p^{\times}$ is of order $p-1$ prime to $p$, we have an isomorphism for any $p$-local spectrum $R$
    \begin{equation*}
        \pi_*(R^{h\mathbb{F}_p^{\times}}) \cong (\pi_*R)^{\mathbb{F}_p^{\times}}
    \end{equation*}
    because the homotopy fixed point spectral sequence collapses. As a result, we can identify $\pi_*R^{h\Sigma_p}$ and $\pi_*R^{t\Sigma_p}$ as subgroups of $\pi_*R^{hC_p}$ and $\pi_*R^{tC_p}$, respectively.
\end{remark}

\begin{remark} \label{eulerclass}
    If we further assume that $R$ is complex oriented and let $i$ be the natural inclusion map $\mathbb{C}_p \rightarrow \Sigma_p$, then by the observation that the Euler class $e_{i^*V} = \prod_{k=1}^{p-1} [k](z)$ is an $\F_p^{\times}$-invariant, we can also learn that
    \begin{equation*}
        R^{t\Sigma_p} \cong  (R^{tC_p})^{h\F_p^{\times}} \cong (R^{hC_p}[e_{i^*V}^{-1}])^{h\F_p^{\times}} \cong (R^{hC_p})^{h\F_p^{\times}}[e_{V}^{-1}] \cong R^{h\Sigma_p}[e_V^{-1}].
    \end{equation*}
\end{remark}

By the above lemma, for a $p$-local $\mathbb{E}_{\infty}$-ring, we can recognize the Tate-Frobenius map as a map
\begin{equation}\label{tate-frobenius}
  \psi \colon  R \rightarrow R^{t\Sigma_p}.
\end{equation}
As further pointed out in \cite[Corollary IV.1.10]{nikolaus2018topological}, the map on $\pi_0$ factors as
\begin{equation*}
   \psi \colon \pi_0R \xrightarrow{P} R^0(B\Sigma_p) \xrightarrow{\operatorname{can}} \pi_0R^{t\Sigma_p},
\end{equation*}
where $P$ is the usual power operation for $R$.

\section{Tate-Frobenius map of Morava $E$-theory modulo $(p,\cdots, u_{n-2})$}

Our comparison uses the following consequence of Mathew’s descendability result \cite{Mathew2016Galois}:

\begin{lemma} \label{constantpro}
    Let $F \colon \Sp_{\K(n)} \rightarrow \Sp$ be a functor that preserves finite limits. Then we have
    \begin{equation*}
        F(L_{\K(n)} \sphere )\cong  \lim_{\Delta}(
      \begin{tikzcd}
	F(E_{n}) & {F(L_{\K(n)}(E_{n})^{\otimes 2})} & {F(L_{\K(n)}(E_{n})^{\otimes 3})} & \cdots
	\arrow[shift left=1, from=1-1, to=1-2]
	\arrow[shift right=1, from=1-1, to=1-2]
	\arrow[shift left=2, from=1-2, to=1-3]
	\arrow[shift right=2, from=1-2, to=1-3]
	\arrow[from=1-2, to=1-3]
	\arrow[shift left=3, from=1-3, to=1-4]
	\arrow[shift right=3, from=1-3, to=1-4]
	\arrow[shift left=1, from=1-3, to=1-4]
	\arrow[shift right=1, from=1-3, to=1-4]
\end{tikzcd}
).
    \end{equation*}
\end{lemma}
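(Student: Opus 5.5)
We will deduce this from Mathew's theorem that $L_{\K(n)}\sphere \to E_n$ is descendable in $\Sp_{\K(n)}$ \cite{Mathew2016Galois}. Write $C^\bullet = L_{\K(n)}(E_n^{\otimes \bullet+1})$ for the $\K(n)$-local Amitsur (cobar) complex, with coaugmentation $L_{\K(n)}\sphere \to C^\bullet$. The plan is to show that the tower of partial totalizations $\{\operatorname{Tot}_m C^\bullet\}_m$ is pro-constant with value $L_{\K(n)}\sphere$. Then we push this pro-equivalence through $F$, using only that $F$ is exact, and finally recover the limit.

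\textbf{Step 1: pro-constancy.} Descendability means $L_{\K(n)}\sphere$ lies in the thick $\otimes$-ideal of $\Sp_{\K(n)}$ generated by $E_n$. By \cite[Proposition 3.20]{Mathew2016Galois}, this is equivalent to the statement that the pro-object $\{\operatorname{Tot}_m C^\bullet\}_{m\ge0}$ is pro-equivalent to the constant pro-object $L_{\K(n)}\sphere$ via the coaugmentation.

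Concretely, let $I$ be the fiber of the unit $L_{\K(n)}\sphere \to E_n$. Then the fiber of $L_{\K(n)}\sphere \to \operatorname{Tot}_m C^\bullet$ is identified with $I^{\otimes (m+1)}$ (tensor in $\Sp_{\K(n)}$). Descendability says the map $I^{\otimes N}\to L_{\K(n)}\sphere$ is null for some $N$. Hence the tower of fibers $\{I^{\otimes(m+1)}\}$ is pro-zero: every $N$-fold composite of transition maps is zero.

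\textbf{Step 2: apply $F$.} Since $F$ preserves finite limits, it commutes with the finite limits $\operatorname{Tot}_m$. It also preserves fiber sequences and zero maps, so it carries pro-zero towers to pro-zero towers. Applying $F$ to the fiber sequence of towers
\[
\{I^{\otimes(m+1)}\}\to L_{\K(n)}\sphere \to \{\operatorname{Tot}_m C^\bullet\}
\]
therefore gives a fiber sequence whose left term $\{F(I^{\otimes (m+1)})\}$ is pro-zero. Hence
\[
F(L_{\K(n)}\sphere)\to \{\operatorname{Tot}_m F(C^\bullet)\}
\]
is a pro-equivalence. Here we use that $\operatorname{Tot}_m$ of a cosimplicial object is a finite limit over $\Delta_{\le m}$, or equivalently is built from finitely many fibers via the skeletal filtration.

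\textbf{Step 3: pass to the limit.} A pro-constant tower has limit equal to its constant value: the $\lim^1$ and Mittag-Leffler issues vanish for pro-zero towers, because $\lim$ of a pro-zero tower of spectra is zero. Therefore
\[
F(L_{\K(n)}\sphere)\simeq \lim_m \operatorname{Tot}_m F(C^\bullet)=\lim_\Delta F(C^\bullet),
\]
which is the claim.

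\textbf{Main obstacle.} The step needing the most care is Step 1, namely identifying the fiber of $L_{\K(n)}\sphere\to\operatorname{Tot}_m C^\bullet$ with $I^{\otimes(m+1)}$ inside the $\K(n)$-local category. This uses that $L_{\K(n)}(-\otimes-)$ is the symmetric monoidal structure on $\Sp_{\K(n)}$. I would simply cite Mathew's Proposition 3.20 rather than reprove it.
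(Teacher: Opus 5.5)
Your proposal is correct and follows the same route as the paper: descendability of $E_n$ in $\Sp_{\K(n)}$ (Mathew, Proposition 10.10) together with Mathew's Proposition 3.20 makes the $\operatorname{Tot}_m$ tower pro-constant with value $L_{\K(n)}\sphere$, and exactness of $F$ then carries this through to the limit. The only difference is that the paper cites Mathew's Proposition 3.10 for that last step, whereas you reprove it directly: $F$ sends the pro-zero fiber tower $\{I^{\otimes(m+1)}\}$ to a pro-zero tower, and such a tower has vanishing limit.
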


\begin{proof}
 By \cite[Proposition 10.10]{Mathew2016Galois}, the Morava $E$-theory $E_n$ is descendable in $\Sp_{\K(n)}$. \cite[Proposition 3.20]{Mathew2016Galois} states that it is equivalent to the $\operatorname{Tot}^n$ tower defined by the cosimplicial diagram
\[\begin{tikzcd}
	E_{n} & {L_{\K(n)}(E_{n})^{\otimes 2}} & {L_{\K(n)}(E_{n})^{\otimes 3}} & \cdots
	\arrow[shift left=1, from=1-1, to=1-2]
	\arrow[shift right=1, from=1-1, to=1-2]
	\arrow[shift left=2, from=1-2, to=1-3]
	\arrow[shift right=2, from=1-2, to=1-3]
	\arrow[from=1-2, to=1-3]
	\arrow[shift left=3, from=1-3, to=1-4]
	\arrow[shift right=3, from=1-3, to=1-4]
	\arrow[shift left=1, from=1-3, to=1-4]
	\arrow[shift right=1, from=1-3, to=1-4]
\end{tikzcd}\]
being a constant pro-object in $\Sp_{\K(n)}$ with limit being $L_{\K(n)}\sphere$. Then \cite[Proposition 3.10]{Mathew2016Galois} precisely gives the statement in the lemma.
\end{proof}
As suggested by the lemma above, we will prove that the Tate-Frobenius map induces a levelwise equivalence of the cosimplicial diagrams induced by the source and target in \cref{main}.

\begin{lemma} \label{flatnesslemma}
   Let $n\geq 2$ be an integer. The Tate-Frobenius map $ \psi \colon E_n \rightarrow E_n^{t\Sigma_p}$ induces a flat $\pi_*E_n$-module structure on $\pi_*E_n^{t\Sigma_p}$. Let $I=(p, \cdots, u_{n-2})$. Then we have an isomorphism
    $$
    \pi_* (E_n^{t\Sigma_p} \otimes_{E_n, \psi} E_n/I) \cong \pi_*(E_n^{t\Sigma_p})/I.
     $$
     Moreover, there is an equivalence of $E_n$-modules 
     $$
        E_n^{t\Sigma_p} \otimes_{E_n,\psi} E_n/I \cong E_n^{t\Sigma_p} \otimes_{E_n} E_n/I.
    $$
\end{lemma}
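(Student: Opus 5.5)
Throughout, I work with the explicit description of $\pi_*E_n^{t\Sigma_p}$ coming from \cref{eulerclass}. By Strickland's theorem, $E_n^0(B\Sigma_p)$ is a free $\pi_0E_n$-module of finite rank, and it classifies subgroups of order $p$ of the universal deformation. Hence
\[
\pi_0E_n^{t\Sigma_p}\cong E_n^0(B\Sigma_p)[e_V^{-1}].
\]
Using \cref{identifyassubring}, this is the $\F_p^\times$-invariant part of $\pi_0E_n^{tC_p}=\pi_0E_n[[z]]/[p](z)\,[z^{-1}]$. The power operation $P$ composed with restriction to the Tate construction makes $\pi_0E_n^{t\Sigma_p}$ a $\pi_0E_n$-algebra via the ``target'' map $s\colon A\to A_p[e_V^{-1}]$, where $A=\pi_0E_n$. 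Geometrically, this is the map classifying the quotient deformation $G/H$.

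\textbf{Step 1: flatness.} I will show that $\pi_0E_n^{t\Sigma_p}$ is flat over $A$ via $\psi$. Strickland and Ando--Hopkins--Strickland show that $A_p=E^0(B\Sigma_p)/I_{tr}$, the ring of subgroups of order $p$, is finite and free over $A$ via the target map, just as it is via the source map (the isogeny $G\to G/H$ has a dual). Inverting $e_V$ is a localization, and localizations are flat. I then need to pass from $E^0(B\Sigma_p)$ to $A_p$: inverting $e_V$ kills the transfer ideal, since $e_V\cdot\mathrm{tr}(1)=0$ and the Tate construction kills induced classes. So
\[
\pi_0E_n^{t\Sigma_p}\cong A_p[e_V^{-1}],
\]
which is a localization of a finite free $A$-module under $\psi$, and therefore flat. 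Periodicity (the class $u$ maps to a unit) extends this to all of $\pi_*$.

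\textbf{Step 2: the homotopy identification.} Since $E_n/I$ is built by iterated cofibers of multiplication by $p,u_1,\dots,u_{n-2}$, and each $u_k$ is a non-zero-divisor mod the previous ones, flatness implies that the Künneth/Tor spectral sequence for $E_n^{t\Sigma_p}\otimes_{E_n,\psi}E_n/I$ collapses. This gives
\[
\pi_*\bigl(E_n^{t\Sigma_p}\otimes_{E_n,\psi}E_n/I\bigr)\cong \pi_*E_n^{t\Sigma_p}/\psi(I).
\]
The crucial point, and the main obstacle, is showing that $\psi(I)$ and $I$ generate the same ideal in $\pi_0E_n^{t\Sigma_p}$.

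For this I will use the congruence $\psi(x)\equiv x^p$ modulo $p$, together with the fact that the quotient isogeny reduces to Frobenius. More precisely, I plan to show inductively that $\psi(u_k)\equiv u_k^p\cdot(\text{unit})$ modulo $(p,u_1,\dots,u_{k-1})$ for $k\le n-2$, and conversely that $u_k$ lies in the ideal generated by the $\psi(u_j)$. The converse direction is the delicate one. It uses that after inverting $e_V$ we are working over the locus where the height is at most $n-1$, which is in effect Hahn's computation. Over $\mathbb F_p$ the Frobenius ideal $(u_1^p,\dots)$ has the same radical as $(u_1,\dots)$ but is not equal to it. So the statement must rely on $\psi(u_k)$ being $u_k$ times a unit in the Tate ring, rather than $u_k^p$, because $e_V$ is invertible there. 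I would first test this against Hahn's formula for the power operation mod $I$, and against the $n=2$ case from Rezk's formulas.

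\textbf{Step 3: the module equivalence.} Once $\psi(I)=I\cdot\pi_*E_n^{t\Sigma_p}$ as ideals, both $E_n^{t\Sigma_p}\otimes_{E_n,\psi}E_n/I$ and $E_n^{t\Sigma_p}\otimes_{E_n}E_n/I$ are obtained from $E_n^{t\Sigma_p}$ by killing a regular sequence generating the same ideal in homotopy. I will construct the equivalence one element at a time: write $\psi(u_k)=\lambda_ku_k+(\text{terms in earlier generators})$ with $\lambda_k$ a unit, so the cofiber of $\psi(u_k)$ on the previous quotient agrees with the cofiber of $u_k$. Each step yields an $E_n$-module map that induces an isomorphism on homotopy.
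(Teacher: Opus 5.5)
\textbf{Verdict: genuine gap.} You never prove $\psi(I)=I$ in $\pi_0E_n^{t\Sigma_p}$, which is the crux of the lemma.

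\textbf{What matches the paper.} Your Step 1 and the K\"unneth collapse in Step 2 are the paper's argument. $e_V$ annihilates the transfer ideal, the target map on $E^0(B\Sigma_p)/I_{\operatorname{tr}}$ is finite flat by Strickland, and localization is flat. This gives $\pi_*(E_n^{t\Sigma_p}\otimes_{E_n,\psi}E_n/I)\cong\pi_*E_n^{t\Sigma_p}/\psi(I)$.

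\textbf{The gap.} You flag the ideal equality yourself, and your sketch for it is internally inconsistent. The proposed induction $\psi(u_k)\equiv u_k^p\cdot(\text{unit})$ modulo $I_k=(p,u_1,\dots,u_{k-1})$ is not available in the Tate ring. The Frobenius congruence only holds after the map $A_p\to\pi_0E_n/p$ classifying the Frobenius kernel. That map sends $e_V$, the linear coefficient of $f_H(x)=\prod_{h\in H}(x-_{\mathbb{G}}h)$, to $0$, so it does not extend over $A_p[e_V^{-1}]$.

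For $1\le k\le n-2$ the congruence is in fact false in $\pi_0E_n^{t\Sigma_p}$. The correct relation, derived below, is $\psi(u_k)\equiv(\text{unit})\cdot u_k$ modulo $I_k$. Modulo $I_k$, $u_k$ is a non-zero-divisor (flatness along the canonical map) and not a unit (since $\pi_0E_n^{t\Sigma_p}/I_{n-1}\cong k((e_V))\neq 0$). Both congruences together would force $u_k$ to be a unit modulo $I_k$.

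Your fallback, that inverting $e_V$ puts you on a locus of height at most $n-1$, is a statement about loci. At best it controls radicals, which you correctly note is not enough. Step 3 rests on the unproved claim $\psi(u_k)=\lambda_ku_k+(\text{earlier terms})$ with $\lambda_k$ a unit, so the module equivalence is not established either.

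\textbf{How to close it.} The paper takes $\psi(I)=I$ from \cite[Lemma 2.5]{hahn2016bousfield}. For a self-contained proof, the missing idea is that inverting $e_V$ turns Lubin's isogeny into an isomorphism.
\begin{enumerate}
\item Over $\pi_0E_n^{t\Sigma_p}$ the series $f_H(x)=\pm e_Vx+O(x^2)$ is invertible. Composing it with the $\star$-isomorphism $\mathbb{G}/H\cong t^*\mathbb{G}$ gives an isomorphism of formal group laws $s^*\mathbb{G}\cong t^*\mathbb{G}$, whose leading coefficient $\lambda$ is a unit multiple of $e_V$.
\item The ideal $I_k$ is generated by the coefficients of $[p](x)$ in degrees $<p^k$. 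It is therefore invariant under arbitrary, not necessarily strict, isomorphisms, so $\psi(I_k)=I_k$ for every $k$.
\item Conjugating $[p](x)\equiv u_kx^{p^k}$ modulo $(I_k,x^{p^k+1})$ by this isomorphism gives $\psi(u_k)\equiv\lambda^{1-p^k}u_k$ modulo $I_k$. This is consistent with Hahn's $\psi(u_{n-1})=e_VU$ against $u_{n-1}=e_V^{p^{n-1}}U'$.
\end{enumerate}

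With this input your Step 3 works, but you still need one more remark. Multiplication by any element of $I_k$ is null on $E_n^{t\Sigma_p}/I_k$. This follows from evenness together with $E_n^{t\Sigma_p}/I_k\simeq E_n^{t\Sigma_p}/p\otimes_{E_n^{t\Sigma_p}}\cdots\otimes_{E_n^{t\Sigma_p}}E_n^{t\Sigma_p}/u_{k-1}$. Hence $\psi(u_k)$ and $u_k$ have equivalent cofibers.

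The paper avoids this element-by-element bookkeeping. It maps $\bigotimes_iE_n^{t\Sigma_p}/\psi(u_i)$ into the ring $E_n^{t\Sigma_p}/I$ of \cite{HahnWilson2018Quotients} via multiplication, which needs only the ideal equality.
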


\begin{proof}
    Because $E_n^{t\Sigma_p}$ and $E_n$ are even periodic, we only need to show flatness for $\pi_0E_n^{t\Sigma_p}$. As we have seen in \cref{eulerclass}, 
    \begin{equation} \label{formula1}
        \pi_0R^{t\Sigma_p} \cong \pi_0R^{h\Sigma_p}[e_V^{-1}],
    \end{equation}
    where $e_V$ is the Euler class of the reduced $\Sigma_p$-representation on $\mathbb{C}^p/\mathbb{C}$ constructed by permutation of the coordinates. Consider the transfer ideal $I_{\operatorname{tr}} \subset R^0(B\Sigma_p)$ which is the image of the transfer map $R^0 \rightarrow R^0(B\Sigma_p)$. We have
    \begin{equation*}
        e_V \operatorname{Tr}_e^{\Sigma_p}(a)= \operatorname{Tr}_e^{\Sigma_p}(\operatorname{Res}_e^{\Sigma_p}(e_V) a) =0
    \end{equation*}
    because we have $\operatorname{Res}_e^{\Sigma_p}(e_V)=0$. Together with (\ref{formula1}), we get
    \begin{equation*}
        \pi_0R^{t\Sigma_p} \cong  \pi_0R^{h\Sigma_p}/I_{\operatorname{tr}}[e_V^{-1}].
    \end{equation*}
    Hence, we only need to show $R^0(B\Sigma_p)/I_{\operatorname{tr}}$ is flat along the power operation map. By \cite{strickland1997finite} (see also \cite[2.2.3]{Sukarto2026Power}), the power operation is realized geometrically as the target map
    \begin{equation*}
        t \colon \operatorname{Sub}_1{\mathbb{G}} \rightarrow \operatorname{Spf}E_0,
    \end{equation*}
    where $\operatorname{Sub}_1(G)$ is defined in section $10$ of \cite{strickland1997finite}, and Proposition $13.1$ in the same paper shows that this is a finite flat map. By flatness, the K\"unneth spectral sequence for calculating the tensor product must collapse, and we have
    \begin{equation*}
        \pi_* E_n^{t\Sigma_p} \otimes_{E_n, \psi} E_n/I \cong \pi_*E_n^{t\Sigma_p}/\psi(I).
    \end{equation*}
    By \cref{identifyassubring} and \cite[Lemma 2.5]{hahn2016bousfield}, we learn that $\psi(I)=I$ in $\pi_0E_n^{t\Sigma_p}$, which computes the homotopy groups as claimed. To show the module equivalence, all we need is to construct an $E_n$-module map that realizes an isomorphism of homotopy groups. Because 
        $E_n/I \cong E_n/p \otimes_{E_n} \cdots \otimes_{E_n} E_n/u_{n-2}$,
    we have
    \begin{equation*}
         E_n^{t\Sigma_p} \otimes_{E_n, \psi} E_n/I \cong  E_n^{t\Sigma_p}/\psi(p) \otimes_{E_n^{t\Sigma_p}} \cdots \otimes_{E_n^{t\Sigma_p}} E_n^{t\Sigma_p}/\psi(u_{n-2}).
    \end{equation*}
    The fact that $\psi(u_i) \in I$ induces an $E_n^{t\Sigma_p}$-module map $ E_n^{t\Sigma_p}/\psi(u_i) \rightarrow E_n^{t\Sigma_p}/I$. Using that $E_n^{t\Sigma_p}/I$ is an $E_n^{t\Sigma_p}$-algebra by \cite{HahnWilson2018Quotients}, these module maps assemble as follows
   \begin{equation*}
       E_n^{t\Sigma_p}/\psi(p)\otimes_{E_n^{t\Sigma_p}} \cdots \otimes_{E_n^{t\Sigma_p}} E_n^{t\Sigma_p}/\psi(u_{n-2}) \rightarrow E_n^{t\Sigma_p}/I \otimes_{E_n^{t\Sigma_p}} \cdots \otimes_{E_n^{t\Sigma_p}} E_n^{t\Sigma_p}/I \xrightarrow[]{\mu} E_n^{t\Sigma_p}/I.
   \end{equation*}    
\end{proof}

The main computational lemma comes from the weight $p$ power operation calculated by Hahn \cite{hahn2016bousfield}:

\begin{lemma} \label{level0eq}
    Let $n\geq 2$ and $I=(p,\cdots, u_{n-2})$ be the ideal in $\pi_0E_n$. Then we have
    \begin{equation*}
      \pi_* E_n^{t\Sigma_p}/I \cong k((e_V))[u^{\pm 1}].
    \end{equation*}
    The Tate-Frobenius map acts on the $\pi_0$-homotopy group modulo the ideal $I$ as
    \begin{equation*}
        \psi \colon \pi_0E_n/I \cong k[[u_{n-1}]] \rightarrow \pi_0E_n^{t\Sigma_p}/I \cong k((e_V)) \quad \psi(u_{n-1})=e_V  U,
    \end{equation*}
  where $U$ is a unit in $\pi_0E_n^{h\Sigma_p}/I$. Moreover, it induces an equivalence of $E_n$-modules
    \begin{equation*}
       \psi \colon  L_{n-1}E_n \otimes_{E_n} E_n/I \xrightarrow[]{\cong}(E_n)^{t\Sigma_p}\otimes_{E_n,\psi} E_n/I,
    \end{equation*}
where the module structure of $E_n^{t\Sigma_p}$ is induced by the Tate-Frobenius algebra map.
\end{lemma}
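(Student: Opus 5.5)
Three steps, in order: compute $\pi_*E_n^{t\Sigma_p}/I$, determine $\psi(u_{n-1})$, and then build the module equivalence by inverting $u_{n-1}$ on the source.

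\emph{Step 1: the Tate homotopy modulo $I$.} Reducing $E_n$ modulo $I$ gives a height-$n$ formal group over $k[[u_{n-1}]]$ whose $p$-series is, up to units, $[p](z)=u_{n-1}z^{p^{n-1}}+\cdots+z^{p^n}$. By \cref{eulerclass} we have $\pi_0E_n^{t\Sigma_p}\cong\pi_0E_n^{h\Sigma_p}[e_V^{-1}]$. By \cref{identifyassubring} this is the $\F_p^\times$-invariant part of $E^0_n(BC_p)[e^{-1}]=E^0_n[[z]]/([p](z))\,[z^{-1}]$, and the invariants are generated by $e_V=\prod_{k}[k](z)$, which equals $z^{p-1}$ up to a unit. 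Modulo $I$, and after inverting $z$, the relation $[p](z)/z^{p^{n-1}}\cdot(\text{unit})=0$ reads
\[ u_{n-1}=-z^{p^n-p^{n-1}}\cdot(\text{unit}). \]
So $u_{n-1}$ becomes a power series in $z$, and the ring collapses to $k((z))$. Its $\F_p^\times$-invariants are $k((e_V))$. Even periodicity then gives
\[ \pi_*E_n^{t\Sigma_p}/I\cong k((e_V))[u^{\pm1}]. \]
Here \cref{flatnesslemma} is used to identify $\pi_*(E_n^{t\Sigma_p})/I$ with the homotopy of $E_n^{t\Sigma_p}\otimes_{E_n}E_n/I$.

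\emph{Step 2: the Frobenius on $u_{n-1}$.} The map $\psi$ on $\pi_0$ is the power operation $P$ followed by $\operatorname{can}$. Modulo $I$, I would invoke Hahn's computation \cite[Lemma 2.5]{hahn2016bousfield} of the total power operation modulo $(p,\dots,u_{n-2})$; this is the same input used in \cref{flatnesslemma} to get $\psi(I)=I$. It gives $P(u_{n-1})$ in $E^0_n(B\Sigma_p)/I$ as $e_V$ times a unit $U$. I expect this to be the main obstacle. What has to be checked is that the leading term is exactly first order in $e_V$ and not $u_{n-1}$ plus higher corrections. Geometrically, $\psi$ is the target map $t$ of \cref{flatnesslemma}, and mod $I$ the quotient by a subgroup of order $p$ in the height-$(n-1)$ connected-étale filtration lands at $u_{n-1}\mapsto$ a uniformizer of $k((e_V))$. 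I would confirm this by comparing with the Lubin–Tate deformation: the quotient curve has height $n-1$ generic part, and Hahn's formula gives the degree. Step 1 already supplies a consistency check: $u_{n-1}$ has $e_V$-valuation $p^{n-1}$, which differs from the valuation of $\psi(u_{n-1})$, as it should for a Frobenius-type map.

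\emph{Step 3: the module equivalence.} The source $L_{n-1}E_n\otimes_{E_n}E_n/I$ has homotopy $k((u_{n-1}))[u^{\pm1}]$, since $L_{n-1}$ of an $E_n/I$-module inverts $v_{n-1}=u_{n-1}u^{-(p^{n-1}-1)}$ up to a unit. By \cref{flatnesslemma}, the target is an $E_n$-module via $\psi$ with homotopy $k((e_V))[u^{\pm1}]$. Because $\psi(u_{n-1})=e_VU$ is invertible there, the $E_n$-module map $\psi\otimes E_n/I\colon E_n/I\to E_n^{t\Sigma_p}\otimes_{E_n,\psi}E_n/I$ inverts $u_{n-1}$. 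Hence the target is $L_{n-1}$-local, a module over $E_n/I[u_{n-1}^{-1}]$, and the map factors through $L_{n-1}E_n\otimes_{E_n} E_n/I$. On $\pi_0$ the factored map is
\[ k((u_{n-1}))\to k((e_V)),\qquad u_{n-1}\mapsto e_VU. \]
This sends a uniformizer to a uniformizer, is continuous for the adic topologies, and induces an isomorphism on residue fields, so it is an isomorphism of complete discretely valued fields. Even periodicity, with $\psi$ respecting $u$ up to a unit, gives the isomorphism in all degrees, which proves the equivalence.
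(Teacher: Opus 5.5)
Your proposal is correct and follows essentially the same route as the paper. Step 1 is the paper's computation $\pi_*E_n^{tC_p}/I\cong k((z))[u^{\pm1}]$, followed by $\F_p^\times$-invariants; the paper takes this from \cite[Proposition 4.3]{hahn2016bousfield}, while you derive it directly from the mod-$I$ $p$-series. Step 2 rests on Hahn exactly as the paper does, but two corrections: the formula $\psi(u_{n-1})=e_VU$ is \cite[Proposition 4.5]{hahn2016bousfield} after renormalizing the valuation of $z$, with Lemma 2.5 only giving $\psi(I)=I$; and your valuation ``consistency check'' adds no real evidence. Step 3 differs only cosmetically: you get $L_{n-1}$-locality of the target from the invertibility of $\psi(u_{n-1})$ rather than from \cite{HoveySadofsky1996}, and you compute $L_{n-1}(E_n/I)\simeq E_n/I[u_{n-1}^{-1}]$ directly rather than via \cite[Proposition 3.7]{barthel2016centralizers}, which makes the argument slightly more self-contained.
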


\begin{proof}
 By \cite[Proposition $4.3$]{hahn2016bousfield}, the power series $[p](z)$ of $E_n$ can be written as $[p](z)=z^{p^{n-1}}g_n(z)$ mod $I$ for some $g_n(z)$ and
 \begin{equation*}
     \pi_*E_n^{hC_p}/(I, g_n(z)) \cong k[[z]][u^{\pm1}]. 
 \end{equation*}
Hence, we have
\begin{equation*}
    \pi_*E_n^{tC_p}/I \cong \pi_*E_n[[z]]/([p](z),I)[z^{-1}] \cong \pi_*E_n^{hC_p}/(I, g_n(z))[z^{-1}] \cong k((z))[u^{\pm1}].
\end{equation*}
We notice that $a \in \mathbb{F}_p^{\times}$ acts on $z$ as $[a](z)$ and trivially on the periodic generator $u$, so the invariant subfield is $k((w))$, where $w=\prod_{k=1}^{p-1}[k](z)$, and $w$ is exactly the restricted $C_p$-Euler class of $V$. By \cref{identifyassubring} and \cref{eulerclass}, we obtain the first isomorphism.

By \cite[Lemma 2.5]{hahn2016bousfield}, we observe that the ideal $\psi(I)=I$ in $\pi_0E_n^{t\Sigma_p}$, so the Tate Frobenius map indeed induces a map as claimed, and its description is precisely Proposition $4.5$ of \cite{hahn2016bousfield} by renormalizing the valuation of $z$ to $1$. 

By \cref{flatnesslemma} and the fact that $(E_n)^{t\Sigma_p}$ is $L_{n-1}$-local \cite{HoveySadofsky1996}, we have the following induced map 
\begin{equation*}
   \psi \colon L_{n-1}E_n \otimes_{E_n} E_n/I \rightarrow (E_n)^{t\Sigma_p} \otimes_{E_n, \psi} E_n/I ,
\end{equation*}
where the $E_n$-module structure of $(E_n)^{t\Sigma_p}$ is induced by the Tate Frobenius map. For the source, we notice that by \cite[Proposition 3.7]{barthel2016centralizers}, 
\begin{equation*}
    L_{n-1}E_n/I \cong L_{K(n-1)}E_n/I \cong (E_n[u_{n-1}^{-1}])^{\wedge}_I/I,
\end{equation*}
which implies 
\begin{equation*}
    \pi_* L_{n-1}E_n/I \cong k((u_{n-1}))[u^{\pm 1}].
\end{equation*}
Notice that the Tate-Frobenius map is multiplicative, so $\psi(u)$ is a unit. Combining this with the description of $\psi(u_{n-1})$, we see the induced map between $L_{n-1}E_n/I$ and $(E_n/I)^{t\Sigma_p}$ must be an equivalence.
\end{proof}

\begin{corollary} \label{thickEnI1}
    Let $M$ be an $E_n$-module that is in the thick subcategory generated by $E_n/I$. Then the map
    \begin{equation*}
        \psi\otimes \operatorname{id} \colon L_{n-1}E_n \otimes_{E_n} M\rightarrow E_n^{t\Sigma_p} \otimes_{E_n, \psi} M
    \end{equation*}
    is an equivalence for $n \geq 2$.
\end{corollary}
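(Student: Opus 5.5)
The plan is the standard thick subcategory argument. Let $\mathcal{C}$ be the full subcategory of $E_n$-modules $M$ for which the natural transformation
\begin{equation*}
    \psi\otimes \operatorname{id} \colon L_{n-1}E_n \otimes_{E_n} M\rightarrow E_n^{t\Sigma_p} \otimes_{E_n, \psi} M
\end{equation*}
is an equivalence. I would show that $\mathcal{C}$ is a thick subcategory of $\operatorname{Mod}_{E_n}$ that contains $E_n/I$. It then contains the thick subcategory generated by $E_n/I$, which is exactly the claim.

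First, the source and the target should be recognized as exact functors of $M$. The source is $M\mapsto L_{n-1}E_n\otimes_{E_n}M$, where $L_{n-1}E_n$ is an $E_n$-module (indeed an $E_n$-algebra) via the localization map. The target is $M\mapsto E_n^{t\Sigma_p}\otimes_{E_n}M$, where $E_n^{t\Sigma_p}$ is an $E_n$-module through the $\mathbb{E}_\infty$-map $\psi$. Both are relative tensor products with a fixed bimodule, so they are exact and preserve retracts.

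Second, the map itself must be a natural transformation. It is $f\otimes_{E_n}\operatorname{id}_M$, where
\begin{equation*}
    f\colon L_{n-1}E_n\to E_n^{t\Sigma_p}
\end{equation*}
is the $E_n$-module map obtained by factoring $\psi\colon E_n\to E_n^{t\Sigma_p}$ through $L_{n-1}E_n$. This factorization exists because $E_n^{t\Sigma_p}$ is $L_{n-1}$-local by \cite{HoveySadofsky1996}. Since $\psi$ is a ring map, $f$ is compatible with the $E_n$-module structure in which $E_n$ acts on the target through $\psi$. I expect this compatibility to be the only step requiring care. To handle it, I would regard $f$ as the $E_n$-algebra map $L_{n-1}E_n\to E_n^{t\Sigma_p}$ given by the universal property of the localization $L_{n-1}E_n$ as an $E_n$-algebra. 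Then $f\otimes_{E_n}\operatorname{id}_M$ is natural in $M$, and for $M=E_n/I$ it agrees with the map in \cref{level0eq}.

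Third, closure of $\mathcal{C}$ follows formally. Given a cofiber sequence $M'\to M\to M''$ with two of the three terms in $\mathcal{C}$, exactness gives a map of cofiber sequences, and the five lemma shows the third term lies in $\mathcal{C}$. Shifts are handled the same way. Retracts are closed because a retract of an equivalence is an equivalence. Finally, $E_n/I\in\mathcal{C}$ is precisely the last statement of \cref{level0eq}, whose module identification uses \cref{flatnesslemma}. This completes the argument.
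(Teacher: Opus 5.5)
Your proposal is correct and is essentially the argument the paper intends. The paper states this corollary without proof, as an immediate consequence of \cref{level0eq}, by exactly your thick subcategory argument: both sides are exact in $M$, the comparison is the base change along $M$ of the $E_n$-module map $L_{n-1}E_n\to E_n^{t\Sigma_p}$ (which exists because $E_n^{t\Sigma_p}$ is $L_{n-1}$-local), and \cref{level0eq} supplies the case $M=E_n/I$; your extra care in constructing that factorization matches how the paper defines the map in the proof of \cref{level0eq}.
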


\section{Extension to higher tensor powers}
\begin{lemma} \label{tateofhigher}
   Fix $n\geq 2$. For $q \geq 1$, set $C^q=L_{\K(n)} (E_n)^{\otimes q+1}$. Then we have
   $$
   \pi_*(C^q)^{t\Sigma_p}/I \cong\Map_{\operatorname{cts}}(\mathbb{G}_n^{q}, \F_{p^n}((e_V)))[u^{\pm 1}],
   $$ 
   where $\mathbb{G}_n$ is the extended Morava stabilizer group. Moreover, the induced Tate-Frobenius map
   \begin{equation*}
       L_{n-1}C^q \otimes_{E_n} E_n/I \rightarrow (C^q)^{t\Sigma_p} \otimes_{E_n, \psi} E_n/I
   \end{equation*}
   is an equivalence.
\end{lemma}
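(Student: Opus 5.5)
Our approach is to reduce the statement for $C^q$ to the level-$0$ case \cref{level0eq} via the standard description of $C^q$ as continuous functions on $\mathbb{G}_n^q$. Recall that $C^q \simeq \Map_{\operatorname{cts}}(\mathbb{G}_n^{q}, E_n)$ as a $\K(n)$-local $E_n$-algebra (for the $E_n$-module structure through the first tensor factor), where the continuous mapping spectrum is the $\K(n)$-localized colimit over open normal subgroups $U\subseteq \mathbb{G}_n$:
\[
C^q \simeq L_{\K(n)}\operatorname{colim}_U \prod_{(\mathbb{G}_n/U)^q} E_n .
\]
We use this in the $\mathbb{E}_\infty$ sense. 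The level $U$ term is a finite product of copies of $E_n$, hence an $\mathbb{E}_\infty$-ring. Tate construction and the Tate-Frobenius are natural in $\mathbb{E}_\infty$-maps and commute with finite products. So at finite level, both the homotopy computation and the equivalence follow immediately from \cref{level0eq} applied factorwise:
\[
\pi_*\Bigl(\prod_{(\mathbb{G}_n/U)^q}E_n\Bigr)^{t\Sigma_p}/I \cong \Map\bigl((\mathbb{G}_n/U)^q, k((e_V))\bigr)[u^{\pm1}].
\]
One subtlety is that the $E_n$-module structure used in the comparison is through the left unit. On each factor this corresponds to precomposition with an automorphism $g\in\mathbb{G}_n$ of $E_n$. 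Since $I$ is $\mathbb{G}_n$-invariant and \cref{level0eq} holds for any identification of the factor with $E_n$, this is harmless.

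The main obstacle is passing to the $\K(n)$-localized colimit, since $(-)^{t\Sigma_p}$ does not commute with filtered colimits or with $\K(n)$-localization in general. We will avoid this by working modulo $I$ throughout and then completing. First, $C^q\otimes_{E_n}E_n/I$ is $L_{\K(n)}$ of a filtered colimit of finite products of $E_n/I$. Since $E_n/I$ is not yet $u_{n-1}$-complete, we further reduce mod $u_{n-1}^k$, or more simply mod $\mathfrak m=(I,u_{n-1})$. On the target, the Tate construction of a module of the form $R\otimes_{E_n}E_n/\mathfrak m$ with $R$ even and flat behaves like $R^{h\Sigma_p}[e_V^{-1}]$. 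Inverting $e_V$ and forming homotopy fixed points both commute with the relevant colimits on homotopy, because $\pi_*(C^q)^{h\Sigma_p}\cong \pi_*C^q[[z]]/([p](z))$ with $\F_p^\times$-invariants. This uses that $C^q$ is even, $\pi_0C^q=\Map_{\operatorname{cts}}(\mathbb{G}_n^q,\pi_0E_n)$ is pro-free, and the complex-orientation formula for $C_p$-homotopy fixed points holds for any even complex-oriented ring. We would then redo the computation of \cref{level0eq} directly with coefficients in the ring $\Map_{\operatorname{cts}}(\mathbb{G}_n^q,\pi_0E_n)$:
\[
\pi_*(C^q)^{tC_p}/I \cong \bigl(\pi_*C^q[[z]]/([p](z),I)\bigr)[z^{-1}].
\]
Hahn's factorization $[p](z)=z^{p^{n-1}}g_n(z)$ holds with $g_n$ having unit leading coefficient in $\pi_0E_n/I$ after the $u_{n-1}$ normalization, and this is preserved by base change along $E_n\to C^q$ via the left unit. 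This gives $\Map_{\operatorname{cts}}(\mathbb{G}_n^q,k)((z))[u^{\pm1}]$, and taking $\F_p^\times$-invariants yields the stated formula with $e_V$. Here one must check that the Laurent series ring of continuous functions is what appears, that is, that the formula is interpreted as $\Map_{\operatorname{cts}}(\mathbb{G}_n^q, k)\otimes_k k((e_V))$ suitably completed. Matching this with the precise meaning of $\Map_{\operatorname{cts}}$ in the statement is the delicate point.

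For the equivalence, we apply the flatness argument of \cref{flatnesslemma} to $C^q$: $\pi_0(C^q)^{t\Sigma_p}$ is flat over $\pi_0E_n$ via $\psi$, since it is a filtered colimit of finite products of the level-$0$ case, and $\psi(I)=I$ by naturality. This gives $(C^q)^{t\Sigma_p}\otimes_{E_n,\psi}E_n/I\simeq (C^q)^{t\Sigma_p}/I$, using that the multiplicative comparison map from the proof of \cref{flatnesslemma} works verbatim. On the source, \cite[Proposition 3.7]{barthel2016centralizers} gives
\[
\pi_*L_{n-1}C^q/I\cong \Map_{\operatorname{cts}}(\mathbb{G}_n^q,k)((u_{n-1}))[u^{\pm1}],
\]
and the map is determined by $\psi(u_{n-1})=e_V U$ with $U$ a unit, together with $\psi(u)$ a unit, both coming from naturality along the left unit $E_n\to C^q$. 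Hence the map on homotopy is the base change of the level-$0$ isomorphism $k((u_{n-1}))\to k((e_V))$ along $k\to\Map_{\operatorname{cts}}(\mathbb{G}_n^q,k)$, with completions matching on both sides. It is therefore an isomorphism, and the map is an equivalence.
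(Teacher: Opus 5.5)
There is a genuine gap in the equivalence step. You say the map on $\pi_0$ is determined by two facts, $\psi(u_{n-1})=e_VU$ and $\psi(u)$ a unit, both obtained by naturality along the unit $E_n\to C^q$. That only determines $\psi$ on the image of $\pi_*E_n$, i.e.\ on constant functions.

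The Tate--Frobenius is a ring map, but it is not linear over $\pi_0C^q$: $\psi(u_{n-1})=e_VU$, whereas $u_{n-1}\cdot 1$ goes to $e_V^{p^{n-1}}U'$. Nothing a priori makes it linear over $\Map_{\operatorname{cts}}(\mathbb{G}_n^q,k)$ either. So your assertion that the map is the ``base change of the level-$0$ isomorphism along $k\to\Map_{\operatorname{cts}}(\mathbb{G}_n^q,k)$'' is unproved. A concrete counterexample: ``evaluate at a fixed $g_0\in\mathbb{G}_n^q$, apply the level-$0$ map, regard the result as a constant function'' is a ring map $\Map_{\operatorname{cts}}(\mathbb{G}_n^q,k((u_{n-1})))\to\Map_{\operatorname{cts}}(\mathbb{G}_n^q,k((e_V)))$. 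It has the same values on $u_{n-1}$ and $u$, and it is not injective.

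The missing input is a pointwise description of $\psi$. The paper gets it from the fact that each evaluation $\operatorname{ev}_g\colon C^q\to E_n$ is an $\mathbb{E}_\infty$-map (Devinatz--Hopkins). Naturality of the Tate--Frobenius then gives $\psi(f)(g)=\psi(f(g))$. Since elements of the target are detected by evaluation at points, $\psi$ is postcomposition with the level-$0$ isomorphism, hence bijective.

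Your finite-level observation could replace this on locally constant functions, via the $\mathbb{E}_\infty$-map $\operatorname{colim}_U\prod_{(\mathbb{G}_n/U)^q}E_n\to C^q$. You would still need a continuity argument to reach all of $\Map_{\operatorname{cts}}(\mathbb{G}_n^q,k)[[u_{n-1}]]$. For instance: $\psi=\operatorname{can}\circ P$ lands, modulo $I$, in $\Map_{\operatorname{cts}}(\mathbb{G}_n^q,k[[e_V]])$, so $\psi(u_{n-1}^N h)=(e_VU)^N\psi(h)$ has $e_V$-valuation at least $N$. The phrase ``completions matching on both sides'' does not supply this.

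Three smaller points.
\begin{enumerate}
\item Your flatness justification (``a filtered colimit of finite products of the level-$0$ case'') is false. $\pi_0C^q=\Map_{\operatorname{cts}}(\mathbb{G}_n^q,\pi_0E_n)$ is a completion of $\operatorname{colim}_U\prod\pi_0E_n$, not the colimit itself; it contains functions that are not locally constant. What you need is better obtained from $(C^q)^{t\Sigma_p}\simeq C^q\otimes_{E_n}E_n^{t\Sigma_p}$, as in the paper.
\item Reducing modulo $\mathfrak m$ or $u_{n-1}^k$ kills both sides. This is because $u_{n-1}$ is a unit both in $\pi_*L_{n-1}C^q/I$ and in $\pi_*(C^q)^{t\Sigma_p}/I$, under either module structure. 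So that device cannot control the completion.
\item Your homotopy computation does not actually need that device. Computing $\pi_*(C^q)^{tC_p}/I$ directly from the complex orientation of the even ring $C^q$ is a valid alternative to the paper's identification $(C^q)^{t\Sigma_p}\simeq C^q\otimes_{E_n}E_n^{t\Sigma_p}$. Your ``delicate point'' resolves as in the paper. Since $k[[z]]\cong k[[u_{n-1}]][[z]]/(g_n)$ is finite free over $k[[u_{n-1}]]$, the quotient is $\Map_{\operatorname{cts}}(\mathbb{G}_n^q,k[[z]])=\Map_{\operatorname{cts}}(\mathbb{G}_n^q,k)[[z]]$, and compactness of $\mathbb{G}_n^q$ handles inverting $z$.
\end{enumerate}
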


\begin{proof}
    By the same argument as in \cite[Lemma 4.2]{clausen2024descent}, using that $E_n^{h\Sigma_p}$ is a finite free $E_n$-module, we have
    \begin{equation} \label{moduleequi}
            (C^q)^{t\Sigma_p} \cong  C^q \otimes_{E_n} E_n^{t\Sigma_p}.
    \end{equation}
    By \cite[section 2]{DevinatzHopkins2004}, there is a natural isomorphism of $\pi_*E_n$-algebras
    \begin{equation*}
        \pi_* L_{\K(n)}E_n^{\otimes q+1} \cong \Map_{\operatorname{cts}}(\mathbb{G}_n^q, \pi_*E_n),
    \end{equation*}
    where the algebra structure map sends $x \in \pi_*E_n$ to $1\otimes \cdots \otimes 1 \otimes x$, which corresponds to the constant function with value \(x\).
     We notice that $I=(p,\cdots, u_{n-2})$ is a regular sequence in $\pi_*E_n$ and $\pi_*L_{\K(n)}E_n^{\otimes q+1} \cong \Map_{\operatorname{cts}}(\mathbb{G}_n^{q}, \pi_*E_n)$. Hence, we have
    \begin{equation*}
        \begin{aligned}
            \pi_* ((C^q)^{t\Sigma_p}/I)
            &\cong (\pi_*(C^q)^{t\Sigma_p})/I\\
            &\cong \pi_*(C^q \otimes_{E_n} E_n^{t\Sigma_p})/I \\
            &\cong (\Map_{\operatorname{cts}}(\mathbb{G}_n^q, \pi_*E_n) \otimes_{\pi_*E_n} \pi_*E_n^{t\Sigma_p})/I\\
            &\cong \Map_{\operatorname{cts}}(\mathbb{G}_n^q, \pi_*E_n/I) \otimes_{\pi_*E_n/I} \pi_*E_n^{t\Sigma_p}/I, \\    
        \end{aligned}
    \end{equation*}
    where the third isomorphism is due to the flatness of $\pi_*E_n^{t\Sigma_p}$ via the canonical map, and the last isomorphism holds because of the constant function algebra structure we described. By \cref{level0eq}, we learn the ring spectrum is even-periodic with $\pi_0$ equal to $\Map_{\operatorname{cts}}(\mathbb{G}_n^q, k[[u_{n-1}]]) \otimes_{k[[u_{n-1}]]} k((e_V))$.

    By recognizing $\pi_*E_n^{t\Sigma_p}$ as the $\mathbb{F}_p^{\times}$-invariant of $\pi_*E_n^{tC_p}$, we notice that
    \begin{equation*}
       \pi_* E_n^{t\Sigma_p}/I \cong (\pi_*E_n^{hC_p}/(I,g_n(z)))^{\mathbb{F}_p^{\times}}[e_V^{-1}],
    \end{equation*}
    where $g_n(z)$ is defined in the proof of \cref{level0eq}. And by Propositions $4.2$ and $4.3$ in \cite{hahn2016bousfield}, the canonical map $\pi_0E_n/I \rightarrow (\pi_0 E_n^{hC_p}/(I,g_n(z)))^{\mathbb{F}_p^{\times}} \cong k[[e_V]]$ maps $u_{n-1}$ to $(e_V)^{p^{n-1}} U'$, where $U'$ is a unit. Hence, $k[[e_V]]$ is a finite free $k[[u_{n-1}]]$-module, and we have
    \begin{equation*}
            \pi_0( (C^q)^{t\Sigma_p}/I) 
            \cong \Map_{\operatorname{cts}}(\mathbb{G}_n^q, k[[e_V]])[e_V^{-1}],
    \end{equation*}
    which is a natural subring of $\Map_{\operatorname{cts}}(\mathbb{G}_n^q, k((e_V)))$. Because $\mathbb{G}_n^q$ is compact, any continuous function $\mathbb{G}_n^q \rightarrow k((e_V))$ factors through $e_V^{-N}k[[e_V]]$ for some sufficiently large $N$. We then obtain the isomorphism as claimed.

    For the equivalence statement, we again use \cite[Proposition 3.7]{barthel2016centralizers} to compute $\pi_0(L_{n-1}C^q/I)$ as follows
  \begin{equation*}
      \pi_0L_{n-1}C^q/I \cong \pi_0L_{K(n-1)}C^q/I\cong \Map_{\operatorname{cts}}(\mathbb{G}_n^q, k[[u_{n-1}]])[u_{n-1}^{-1}] \cong \Map_{\operatorname{cts}}(\mathbb{G}_n^q, k((u_{n-1}))),
  \end{equation*}
  where the last isomorphism holds by the compactness of $\mathbb{G}_n^q$ as we argued above. Because $\psi$ is multiplicative, we know $\psi(u)$ is a unit, so we only need to consider the Tate-Frobenius map on $\pi_0$:
  \begin{equation*}
      \psi_0 \colon \Map_{\operatorname{cts}}(\mathbb{G}_n^q, k((u_{n-1}))) \rightarrow \Map_{\operatorname{cts}}(\mathbb{G}_n^q, k((e_V))).
  \end{equation*}
To understand this map, we recall that for any $g \in \mathbb{G}_n^q$, the algebra morphism 
  \begin{equation*}
      \operatorname{ev}_g \colon \pi_*C^{q} \cong \Map_{\operatorname{cts}}(\mathbb{G}_n^q, \pi_*E_n) \rightarrow \pi_*E_n, \quad  f \mapsto f(g)
  \end{equation*}
  can be realized as an $\mathbb{E}_{\infty}$-morphism $\operatorname{ev}_g \colon C^{q} \rightarrow E_{n}$ by \cite[Theorem 4.1, Lemma 4.7]{DevinatzHopkins2004}. Because the Tate-Frobenius map is constructed as a natural morphism on $\mathbb{E}_{\infty}-$rings, we have the following commutative diagram:
\[\begin{tikzcd}
	{C^{q}} & {(C^{q})^{t\Sigma_p}} & {C^{q} \otimes_{E_n} (E_n)^{t\Sigma_p}} \\
	{E_n} & {E_n^{t\Sigma_p}} & {E_n\otimes_{E_n} (E_n)^{t\Sigma_p}}
	\arrow["\psi", from=1-1, to=1-2]
	\arrow["{\operatorname{ev}_g}", from=1-1, to=2-1]
	\arrow["{(\operatorname{ev}_g)^{t\Sigma_p}}", from=1-2, to=2-2]
	\arrow["\cong"', from=1-3, to=1-2]
	\arrow["{\operatorname{ev}_g \otimes \id}", from=1-3, to=2-3]
	\arrow["\psi", from=2-1, to=2-2]
	\arrow[equals, from=2-2, to=2-3]
\end{tikzcd}\]
Together with (\ref{moduleequi}) and \cref{level0eq}, we obtain a commutative diagram of $E_n$-modules
\[\begin{tikzcd}
	{L_{n-1}C^{q}\otimes_{E_n} E_n/I} & {(C^{q})^{t\Sigma_p}\otimes_{E_n,\psi}E_n/I} & {C^{q} \otimes_{E_n} (E_n)^{t\Sigma_p} \otimes_{E_n} E_n/I} \\
	{L_{n-1}E_n \otimes_{E_n} E_n/I} & {E_n^{t\Sigma_p} \otimes_{E_n,\psi}E_n/I} & {E_n\otimes_{E_n} (E_n)^{t\Sigma_p} \otimes_{E_n}E_n/I}
	\arrow["\psi", from=1-1, to=1-2]
	\arrow["{\operatorname{ev}_g\otimes \operatorname{id}}", from=1-1, to=2-1]
	\arrow["\cong", from=1-2, to=1-3]
	\arrow["{(\operatorname{ev}_g)^{t\Sigma_p} \otimes \operatorname{id}}", from=1-2, to=2-2]
	\arrow["{\operatorname{ev}_g \otimes \id \otimes \operatorname{id}}", from=1-3, to=2-3]
	\arrow["\psi", from=2-1, to=2-2]
	\arrow["\cong", from=2-2, to=2-3]
\end{tikzcd}\]
Applying $\pi_0$ to the diagram, we find that for any $f \in \Map_{\operatorname{cts}}(\mathbb{G}_n^q, k((u_{n-1})))$ and any $g \in \mathbb{G}_n^q$, we have 
\begin{equation*}
      \psi(f)(g)=\psi(f(g)). 
  \end{equation*}
This implies that the Tate-Frobenius map $\psi$ on higher levels is simply postcomposition with $\psi$ at level $0$. Hence, the equivalence at level $0$ implies that the higher-level map is an equivalence for $C^q$ with $q \geq 1$.
\end{proof}

\begin{corollary} \label{thickEnI2}
     Let $M$ be an $E_n$-module that is in the thick subcategory generated by $E_n/I$. Then the map
    \begin{equation*}
        \psi\otimes \operatorname{id} \colon L_{n-1}C^q \otimes_{E_n} M\rightarrow (C^q)^{t\Sigma_p} \otimes_{E_n, \psi} M
    \end{equation*}
    is an equivalence for $n\geq 2$.
\end{corollary}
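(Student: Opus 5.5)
The plan is a thick subcategory argument, exactly parallel to how \cref{thickEnI1} follows from \cref{level0eq}, now with \cref{tateofhigher} as the base case. Fix $q\geq 1$ and let $\mathcal{T}$ be the full subcategory of $E_n$-modules $M$ for which
\[
\psi\otimes\operatorname{id}\colon L_{n-1}C^q\otimes_{E_n}M\rightarrow (C^q)^{t\Sigma_p}\otimes_{E_n,\psi}M
\]
is an equivalence. I will show that $\mathcal{T}$ is a thick subcategory containing $E_n/I$. The conclusion then follows for every $M$ in the thick subcategory generated by $E_n/I$.

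\textbf{The map is a natural transformation of exact functors.} The source functor $M\mapsto L_{n-1}C^q\otimes_{E_n}M$ and the target functor $M\mapsto (C^q)^{t\Sigma_p}\otimes_{E_n,\psi}M$ are both exact on $E_n$-modules, since relative tensor products preserve cofiber sequences. The comparison should be a natural transformation between them. For this I first regard $(C^q)^{t\Sigma_p}$ as an $E_n$-module through the composite
\[
E_n\xrightarrow{\psi}E_n^{t\Sigma_p}\rightarrow (C^q)^{t\Sigma_p},
\]
where the second map is induced by the unit $E_n\to C^q$; this is the same as $(-)^{t\Sigma_p}$ applied to $E_n\to C^q$ followed by the Tate-Frobenius, by naturality of $\psi$ on $\mathbb{E}_\infty$-rings. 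Next I take the $E_n$-linear map $C^q\to (C^q)^{t\Sigma_p}$ (linear with respect to this structure, since $\psi$ is a ring map compatible with the unit) and factor it through $L_{n-1}C^q$, using that $(C^q)^{t\Sigma_p}$ is $L_{n-1}$-local by Hovey--Sadofsky. Finally I tensor over $E_n$ with $M$. Here $C^q$ is viewed as an $E_n$-module via the last tensor factor, matching the constant-function algebra structure in \cref{tateofhigher}. With this naturality in hand, the five lemma in the stable setting shows that $\mathcal{T}$ is closed under cofibers and shifts. It is closed under retracts because a retract of an equivalence is an equivalence. So $\mathcal{T}$ is thick.

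\textbf{Base case.} $E_n/I$ lies in $\mathcal{T}$ by the equivalence statement of \cref{tateofhigher}. The one point to check is that the map produced there is the same as $\psi\otimes\operatorname{id}$ as constructed above. It is, since both come from applying $-\otimes_{E_n}E_n/I$ to the same map $L_{n-1}C^q\to (C^q)^{t\Sigma_p}$.

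\textbf{Main obstacle.} The delicate step is not the thick subcategory formalism but making the naturality honest. Concretely, $L_{n-1}C^q\otimes_{E_n}M\cong L_{n-1}(C^q\otimes_{E_n}M)$ need not be used at all: the functor $L_{n-1}C^q\otimes_{E_n}(-)$ is exact as written, and that is all the argument requires. The remaining care is in confirming that the two $E_n$-module structures involved are compatible, so that the comparison is a genuine $E_n$-linear map before tensoring with $M$. I expect that to follow directly from the commutative diagrams already drawn in the proof of \cref{tateofhigher}.
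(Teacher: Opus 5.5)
Your proposal is correct and matches the paper's argument. The paper gives no separate proof: the corollary follows from the equivalence in Lemma~\ref{tateofhigher} for $E_n/I$, because $\psi\otimes\operatorname{id}_M$ is a natural transformation of exact functors of $M$, so the modules on which it is an equivalence form a thick subcategory, just as Corollary~\ref{thickEnI1} follows from Lemma~\ref{level0eq}. Your check of $E_n$-linearity (naturality of $\psi$, plus the factorization through $L_{n-1}C^q$ via Hovey--Sadofsky) fills in what the paper leaves implicit.
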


\section{Proof of the main theorem}
\begin{theorem}  \label{main}
   Let \(p\) be a prime and \(n\geq 1\). For every finite \(p\)-local spectrum \(Y\) of type \(n-1\), the Tate-Frobenius of the \(K(n)\)-local sphere induces a natural equivalence
\[
\psi_Y\colon
L_{n-1}L_{K(n)}Y
\xrightarrow{\;\simeq\;}
(L_{K(n)}Y)^{t\Sigma_p}
\simeq
(L_nY)^{t\Sigma_p},
\]
where $\Sigma_p$ acts trivially on $Y$.
\end{theorem}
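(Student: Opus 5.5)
The plan is to construct $\psi_Y$ naturally, reduce to a statement about a finite limit-preserving functor on $\Sp_{\K(n)}$, and then apply \cref{constantpro} together with the levelwise equivalences of \cref{thickEnI2}.

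\textbf{Construction of $\psi_Y$.} The Tate-Frobenius $\psi\colon L_{\K(n)}\sphere\to (L_{\K(n)}\sphere)^{t\Sigma_p}$ is a map of spectra. Tensoring with the finite spectrum $Y$ and using that $(-)^{t\Sigma_p}$ commutes with smashing with a finite spectrum (trivial action) gives a map
\[
L_{\K(n)}Y\simeq L_{\K(n)}\sphere\otimes Y\to (L_{\K(n)}\sphere\otimes Y)^{t\Sigma_p}\simeq (L_{\K(n)}Y)^{t\Sigma_p}.
\]
By Hovey--Sadofsky the target is $L_{n-1}$-local, so the map factors through $L_{n-1}L_{\K(n)}Y$; this factorization is $\psi_Y$. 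The identification $(L_{\K(n)}Y)^{t\Sigma_p}\simeq (L_nY)^{t\Sigma_p}$ follows from the chromatic fracture square. Since $Y$ has type $n-1$, the fiber of $L_nY\to L_{\K(n)}Y$ is $L_{n-1}$-local, and Tate constructions on $L_{n-1}$-local spectra with trivial action are $L_{n-2}$-local. I expect, and would need to check, that such a Tate construction is also $\K(n-1)$-acyclic after tensoring with type $n-1$ finite $Y$, so that it vanishes, since $L_{n-2}Y'=0$ for type $n-1$ finite $Y'$. Concretely, $(L_{n-1}\sphere)^{t\Sigma_p}\otimes Y$ is $L_{n-2}$-local and $\otimes Y$ kills $L_{n-2}$-local objects (finite type $n-1$ smashing), giving vanishing.

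\textbf{Case $n\geq 2$, reduction to $E_n/I$.} Consider the two functors $F_1(X)=L_{n-1}X\otimes Y$ and $F_2(X)=(X\otimes Y)^{t\Sigma_p}$ from $\Sp_{\K(n)}$ to $\Sp$. Both preserve finite limits, since $L_{n-1}$ is smashing and the Tate construction is exact. The map $\psi_Y$ is $F_1(L_{\K(n)}\sphere)\to F_2(L_{\K(n)}\sphere)$, compatible with the cosimplicial diagrams by naturality of the Tate-Frobenius on $\mathbb{E}_\infty$-rings. By \cref{constantpro}, it suffices to prove that $L_{n-1}C^q\otimes Y\to (C^q\otimes Y)^{t\Sigma_p}$ is an equivalence for all $q\geq0$.

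Rewrite $C^q\otimes Y=C^q\otimes_{E_n}(E_n\otimes Y)$. The map $C^q\to(C^q)^{t\Sigma_p}$ tensored over $E_n$ along $\psi$ with $M=E_n\otimes Y$ gives precisely $\psi\otimes\id$ in \cref{thickEnI1} (for $q=0$) and \cref{thickEnI2} (for $q\geq1$). The \textbf{main obstacle} is matching the two $E_n$-module structures. The map $Y\to (Y)^{t\Sigma_p}$ is built from the Frobenius applied to the sphere factor, whereas the corollaries use the $\psi$-twisted module structure on $M$. I would handle this by observing that $E_n\otimes Y$ is an $E_n$-module induced from $\sphere$, so $(C^q)^{t\Sigma_p}\otimes_{E_n,\psi}(E_n\otimes Y)\simeq (C^q)^{t\Sigma_p}\otimes Y$ for either module structure. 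So the Frobenius-induced map agrees with $\psi\otimes\id$.

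It then remains to show that $E_n\otimes Y$ lies in the thick subcategory of $E_n$-modules generated by $E_n/I$. Since $Y$ has type $n-1$, thick subcategory theorem puts $Y$ in the thick subcategory generated by a generalized Moore spectrum $\sphere/(p^{i_0},v_1^{i_1},\dots,v_{n-2}^{i_{n-2}})$. Then $E_n\otimes$ this is $E_n/(p^{i_0},\dots,u_{n-2}^{i_{n-2}})$ up to units, which is built from $E_n/I$ by finitely many cofiber sequences.

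\textbf{Case $n=1$.} Here $Y$ is any finite $p$-local spectrum, so by thickness it suffices to take $Y=\sphere$ or $Y=\sphere/p$ issues aside. The statement $L_0L_{\K(1)}\sphere\simeq (L_{\K(1)}\sphere)^{t\Sigma_p}$ is the result cited from Heuts--Meier--Nikolaus. Alternatively, the same descent argument works with $I=0$. In that case $E_1^{t\Sigma_p}$ is rationally computed with $\psi$ given by the Adams operation/Frobenius on $p$-adic K-theory. The level-$0$ statement reduces to $\psi\colon E_1[1/p]\to E_1^{t\Sigma_p}$ being an equivalence, which I would verify directly using $\pi_0 KU_p^{tC_p}\cong\mathbb{Z}_p[\zeta_p]\otimes\mathbb{Q}$ and taking $\F_p^\times$-invariants.
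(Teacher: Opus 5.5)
Your proposal is correct and follows essentially the paper's route: Hovey--Sadofsky plus the fracture square for the second equivalence, \cref{constantpro} to reduce to the levelwise Tate-Frobenius maps on $C^q$, \cref{thickEnI1} and \cref{thickEnI2} for those levelwise maps, and generalized Moore spectra to put $E_n\otimes Y$ in the thick subcategory generated by $E_n/I$, with $n=1$ handled by citation or the $KU_p^\wedge$ computation. The only cosmetic difference is that you apply the thick-subcategory argument to the $E_n$-module $E_n\otimes Y$ directly, whereas the paper first uses thickness in $Y$ to reduce to a single generalized Moore spectrum $X$.
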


\begin{proof}
First, notice that the second equivalence comes from the chromatic fracture square and we have $(L_{n-1}Y)^{t\Sigma_p}=(L_{n-1}L_{K(n)}\sphere \otimes Y)^{t\Sigma_p}=0$ by \cite[Corollary 3.2]{HoveySadofsky1996}.
 
Because $L_{\K(n)}\sphere$ is a $p$-complete $\mathbb{E}_{\infty}$-ring, it admits the Tate-Frobenius map as we discussed. Tensoring the morphism with identity maps for any finite type $n-1$ spectrum $Y$, we obtain the natural morphisms in the theorem.
   
We observe that it suffices to prove it for one type $n-1$-complex. Indeed, the collection of finite spectra such that $\psi_Y$ is an isomorphism forms a thick subcategory, and the reduction follows from the thick subcategory theorem. If $Y$ has at least type $n$, we notice that both sides must be $0$. Hence, let us consider a type $n-1$ generalized Moore spectrum
   \begin{equation*}
       X= \sphere/(p^{i_0}, v_1^{i_1}, \cdots, v_{n-2}^{i_{n-2}}).
   \end{equation*}
When $n=1$, we can choose $X=\sphere$ and the map is simply $\psi \colon L_{\K(1)}\sphere[1/p] \rightarrow (L_{\K(1)}\sphere)^{t\Sigma_p}$, where both sides are equivalent to $H\mathbb{Q}_p \oplus\Sigma^{-1}H\mathbb{Q}_p$, and the fact that $\psi$ is an equivalence follows by descent from the calculation of $\psi$ on $\operatorname{KU}_p^{\wedge}$ as explained in \cite[Proposition IV.1.12] {nikolaus2018topological}.

Because $L_{n-1}(-) \otimes X$ and $(-)^{t\Sigma_p}\otimes X$ are both exact functors, we learn from \cref{constantpro} that it is enough to show the map between cosimplicial diagrams induced by $\psi_X$ is a levelwise equivalence. Write
   $
       C^q \colon= L_{\K(n)}(E_n)^{\otimes (q+1)} 
   $
   and $I=(p,\cdots, u_{n-2})$. Then at level $q$, the map $\psi_X$ induces the Tate-Frobenius map on $C^q$ as
   \begin{equation*}
      \psi\otimes \id \colon L_{n-1}C^q \otimes_{E_n} (E_n\otimes X) \rightarrow (C^q)^{t\Sigma_p}\otimes_{E_n, \psi} (E_n\otimes X).
  \end{equation*}
  By \cref{thickEnI1} and \ref{thickEnI2}, we only need to show $E_n\otimes X$ is in the thick subcategory generated by $E_n/I$, and the theorem is proven because we get a levelwise equivalence of the cosimplicial diagrams. By construction of X, we know
      $BP\otimes X \cong BP/(p^{i_0}, v_1^{i_1}, \cdots, v_{n-2}^{i_{n-2}}).$
  After base change along the map $BP \rightarrow E_n$ which sends $v_i$ to $u_iu^{-p^i+1}$, we see
  \begin{equation*}
      E_n \otimes X \cong E_n/(p^{i_0}, u_1^{i_1}, \cdots, u_{n-2}^{i_{n-2}}).
  \end{equation*}
  To construct this module from $E_n/I$, let us write 
  $$E_n[u_k^l]=E_n/(p^{i_0}, \cdots, u_{k-1}^{i_{k-1}}, u_{k}^{l}, u_{k+1}, \cdots,u_{n-2})$$ for some $0 \leq k \leq n-2 $ and $l\geq 2$. Then there is a cofiber sequence of $E_n$-modules
  \begin{equation*}
      E_n[u_k]\rightarrow E_n[u_k^{l}] \rightarrow  E_n[u_k^{l-1}]
  \end{equation*}
  induced by the diagram (write $E_n[\bar{u}_{k}] = E_n/(p^{i_0},\cdots, u_{k-1}^{i_{k-1}}, u_{k+1}, \cdots, u_{n-2})$)
\[\begin{tikzcd}
	{E_n[\bar{u}_{k}]} & {E_n[\bar{u}_{k}]} & {E_n[u_k] } \\
	{E_n[\bar{u}_{k}]} & {E_n[\bar{u}_{k}]} & {E_n[u_k^l] } \\
	0 & {E_n[u_k^{l-1}] } & {E_n[u_k^{l-1}] }
	\arrow["{u_{k}}", from=1-1, to=1-2]
	\arrow["{\operatorname{id}}"', from=1-1, to=2-1]
	\arrow[from=1-2, to=1-3]
	\arrow["{u_{k}^{l-1}}"', from=1-2, to=2-2]
	\arrow[from=1-3, to=2-3]
	\arrow["{u_{k}^{l}}", from=2-1, to=2-2]
	\arrow[from=2-1, to=3-1]
	\arrow[from=2-2, to=2-3]
	\arrow[from=2-2, to=3-2]
	\arrow[from=2-3, to=3-3]
	\arrow[from=3-1, to=3-2]
	\arrow[from=3-2, to=3-3]
\end{tikzcd}\]
so the module $E_n\otimes X$ can be built inductively starting from $E_n/I$, which shows it is in the thick subcategory and completes the proof.
\end{proof}

\bibliographystyle{amsalpha}
\bibliography{ref}

\end{document}